\author{Shoji Yokura$^{(*)}$}
\address{Department of Mathematics and Computer Science,
Faculty of Science\\ University of Kagoshima,
21-35 Korimoto 1-Chome,
Kagoshima 890-0065, Japan\\
yokura@sci.kagoshima-u.ac.jp}
\date{}
\thanks{(*) Partially supported by JSPS KAKENHI Grant Numbers 23244008, 24340007, 24540085 }
\title [Hirzebruch $\chi_y$-genera of
complex algebraic fiber bundles]
{Hirzebruch $\chi_y$-genera of
complex algebraic fiber bundles\\
--- the multiplicativity of the signature modulo $4$ ---}
\begin{document} 
\numberwithin{equation}{section}
\newtheorem{thm}[equation]{Theorem}
\newtheorem{pro}[equation]{Proposition}
\newtheorem{prob}[equation]{Problem}
\newtheorem{qu}[equation]{Question}
\newtheorem{cor}[equation]{Corollary}
\newtheorem{con}[equation]{Conjecture}
\newtheorem{lem}[equation]{Lemma}
\theoremstyle{definition}
\newtheorem{ex}[equation]{Example}
\newtheorem{defn}[equation]{Definition}
\newtheorem{ob}[equation]{Observation}
\newtheorem{rem}[equation]{Remark}
\renewcommand{\rmdefault}{ptm}
\def\alp{\alpha}
\def\be{\beta}
\def\jeden{1\hskip-3.5pt1}
\def\om{\omega}
\def\bigstar{\mathbf{\star}}
\def\ep{\epsilon}
\def\vep{\varepsilon}
\def\Om{\Omega}
\def\la{\lambda}
\def\La{\Lambda}
\def\si{\sigma}
\def\Si{\Sigma}
\def\Cal{\mathcal}
\def\m {\mathcal}
\def\ga{\gamma}
\def\Ga{\Gamma}
\def\de{\delta}
\def\De{\Delta}
\def\bF{\mathbb{F}}
\def\bH{\mathbb H}
\def\bPH{\mathbb {PH}}
\def \bB{\mathbb B}
\def \bA{\mathbb A}
\def \bC{\mathbb C}
\def \bOB{\mathbb {OB}}
\def \bM{\mathbb M}
\def \bOM{\mathbb {OM}}
\def \mA{\mathcal A}
\def \mB{\mathcal B}
\def \mC{\mathcal C}
\def \mR{\mathcal R}
\def \mH{\mathcal H}
\def \mM{\mathcal M}
\def \mM{\mathcal M}
\def \mT{\mathcal {T}}
\def \mAB{\mathcal {AB}}
\def \bK{\mathbb K}
\def \bG{\mathbf G}
\def \bL{\mathbb L}
\def\bN{\mathbb N}
\def\bR{\mathbb R}
\def\bP{\mathbb P}
\def\bZ{\mathbb Z}
\def\bC{\mathbb  C}
\def \bQ{\mathbb Q}
\newcommand{\bb}[1]{\mbox{$\mathbb{#1}$}}

\def\op{\operatorname}

\maketitle

\begin{abstract} Let $E$ be a fiber $F$ bundle over a base $B$ such that $E, F$ and $B$ are smooth compact complex algebraic varieties. In this paper we give explicit formulae for the difference of the Hirzebruch $\chi_y$-genus $\chi_y(E) - \chi_y(F)\chi_y(B)$. As a byproduct of the formulae we obtain that the signature of such a fiber bundle is multiplicative mod $4$, i.e. the signature difference $\sigma(E) -\sigma(F)\sigma(B)$ is always divisible by $4$. In the case of $\op{dim}_{\mathbb C}E \leqq 4$ the $\chi_y$-genus difference $\chi_y(E) - \chi_y(F)\chi_y(B)$ can be concretely described only in terms of the signature difference $\sigma(E) -\sigma(F) \sigma(B)$ and/or the Todd genus difference $\tau(E) - \tau(F)\tau(B)$. Using this we can obtain that in order for
$\chi_y$ to be multiplicative $\chi_y(E) = \chi_y(F)\cdot \chi_y(B)$ for any such fiber bundle $y$ has to be $-1$, namely only the Euler-Poincar\'e characteristic is multiplicative for any such fiber bundle.
\end{abstract}


\section{Introduction}

The Hirzebruch $\chi_y$-genus has been introduced by F. Hirzebruch \cite{Hir} (also see \cite{HBJ}) in order to extend his famous Hirzebruch-Riemann-Roch theorem to a generalized one. If $y=-1, 0, 1$, then these $\chi_y$-genera are respectively $\chi_{-1}(X) =\chi(X)$ the \emph{Euler-Poincar\'e characteristic}, $\chi_0(X) = \tau(X)$ the \emph{Todd genus} and $\chi_1(X)=\sigma(X)$ the \emph{signature}, which are very important invariants in geometry and topology, and even in mathematical physics. In this sense the Hirzebruch $\chi_y$-genus unifies these three important characteristic numbers.

It is well-known that the Euler-Poincar\'e characteristic is multiplicative for \emph{any} topological fiber bundle, i.e. if $E$ is a fiber bundle with a fiber space $F$ and a base space $B$, then $\chi(E) = \chi(F)\chi(B)$ holds. As to the signature it is not the case. S. S. Chern, F. Hirzebruch and J.-P. Serre \cite{CHS} proved that the signature is multiplicative for a fiber bundle under a certain monodromy condition, i.e. if the fundamental group $\pi_1(B)$ of the base space $B$ acts trivially on the cohomology group $H^*(F; \mathbb R)$ of the fiber space $F$, for example, that is the case when $B$ is simply-connected. The signature is in general not multiplicative for fibre bundles, as shown by M. Atiyah \cite{At}, F. Hirzebruch \cite {Hir0} and K. Kodaira \cite{Ko}. These examples are all of real dimension $4$, which is the lowest possible one, with $\op{dim}_{\mathbb R} F = \op{dim}_{\mathbb R}B =2$. Which means that the signature of these examples are automatically non-zero, because $\sigma(F) = \sigma(B) =0$ by the definition of the signature (which is defined to be zero if the real dimension of the manifold is not divisible by $4$). H. Endo \cite{Endo}  and W. Meyer \cite{Mey}  further studied such surface bundles over surfaces, in which they show that the signature of such fiber bundles are always divisible by $4$, i.e. $\sigma(E) \equiv 0 \op{mod} 4.$

I. Hambleton, A. Korzeniewski and A. Ranicki  \cite{HKR}  have showed that for a $PL$ fibre bundle $F \hookrightarrow E \to B$ of closed, connected, compatibly oriented $PL$ manifolds the difference $\sigma(E) - \sigma(F)\sigma(B)$ is divisible by $4$, i.e. $\sigma(E) \equiv \sigma(F) \sigma(B) \op{mod} 4.$

The Hirzebruch $\chi_y$-genus is multiplicative, i.e. $\chi_y(X \times Y) = \chi_y(X)\chi_y(Y).$ In \cite{CMS} (also see \cite {CLMS1}, \cite{CLMS2}, \cite{MS}) S. Cappell, L. Maxim and J. Shaneson have shown that the Hirzebruch $\chi_y$-genus is also multiplicative for a fiber bundle, i.e., $\chi_y(E) = \chi_y(F)\chi_y(B)$, if the fundamental group $\pi_1(B)$ of the base space $B$ acts trivially on the cohomology group $H^*(F; \mathbb R)$ of the fiber space $F$, extending the above mentioned result of Chern-Hirzebruch-Serre \cite{CHS}. In fact they deal with the more general Hirzebruch $\chi_y$-genus of possibly singular varieties, which are defined to be the integral $\int_X {T_y}_*(X)$ of the motivic Hirzeburch class ${T_y}_*(X)$ introduced in \cite{BSY1} (cf. \cite{BSY2}). Even in the case of possibly singular varieties, for the multiplicativity $\chi_y(E) = \chi_y(F)\chi_y(B)$ to hold, the smoothness of the base variety $B$ is required and it is not known whether this smoothness can be dropped or not.  In this paper we consider only smooth varieties, thus we do not deal with this more general $\chi_y$-genus $\chi_y(X)= \int_X {T_y}_*(X)$, mainly because in this paper the duality formula (see below) of the Hirezebruch $\chi_y$-genus is crucial. 

In this paper we consider the difference of the Hirzebruch $\chi_y$-genus, $\chi_y(E) - \chi_y(F)\chi_y(B)$, more precisely we try to express the difference in terms of the Euler-Poincar\'e characteristic, the Todd genus and the signature. As a byproduct, it turns out that from the explicit formula describing the difference $\chi_y(E) - \chi_y(F)\chi_y(B)$ we obtain various results, for example we obtain 
that the signature of such a fiber bundle is multiplicative mod $4$, i.e. $\sigma(E) \equiv \sigma(F)\sigma(B) \op{mod} 4$, which is compatible with the above mentioned result of Hambleton-Korzeniewski-Ranicki \cite{HKR}. In this sense our result can be said to be \emph{an interesting application of the Hirzebruch $\chi_y$-genus to the ``multiplicativity problem" of the signature}, and also that only the Euler-Poincar\'e characteristic is multiplicative for any such fiber bundles.

In  the case of possibly singular varieties, for the difference of the $\chi_y$-genus $\chi_y(X)$ and the motivic Hirzebruch class $T_{y*}(X)$ and their intersection-homological analogues $I\chi_y(X)$ and $IT_{y*}(X)$, see the papers \cite{CS}, \cite{CMS}, \cite{CLMS1}, \cite{CLMS2}, \cite{MS} etc (cf. \cite{To}).

\section{Hirzebruch $\chi_y$-genus}

First we recall the  definition of the Hirzebruch $\chi_y$-genus. Let $X$ be a smooth complex algebraic 
variety. The
\emph{$\chi_{y}$-genus} of $X$ is defined by 
$$\chi_{y}(X):= 
\sum_{p\geq 0} \chi(X,\Lambda^{p}T^{*}X)y^p
= \sum_{p\geq 0} \left( \sum_{i\geq 0}
(-1)^{i}\op{dim}_{\mathbb C}H^{i}(X,\Lambda^{p}T^{*}X) \right)y^p\:.
$$

Thus the $\chi_y$-genus is the generating function of the Euler-Poincar\'e characteristic $\chi(X,\Lambda^{p}T^{*}X)$ of the sheaf $\Lambda^{p}T^{*}X$, which shall be simply denoted by $\chi^p(X)$:
$$\chi_{y}(X)= \sum_{p\geq 0} \chi^p(X)y^p.$$
Since $\Lambda^{p}T^{*}X =0$ for $p> \op{dim}_{\mathbb C}X$, $\chi_{y}(X)$ is a polynomial of at most degree $\op{dim}_{\mathbb C} X$. $\chi_y$ is multiplicative, i.e. $\chi_y(X \times Y) = \chi_y(X) \chi_y(Y)$. 


For the distinguished three values $-1, 0, 1$ of $y$, by the definition we have the following:

\begin{itemize}
\item the Euler-Poincar\'e characteristic:

 $\chi(X) = \chi_{-1}(X) = \chi^0(X)- \chi^1(X) + \chi^2(X)- \cdots +(-1)^n\chi^n(X),$

\item the Todd genus:

$\tau (X) = \chi_0(X) = \chi^0(X), \hspace{3cm}$

\item the signature:

$\sigma(X) = \chi_1(X) = \chi^0(X)+ \chi^1(X) + \chi^2(X)+\cdots +\chi^n(X).$
\end{itemize}

We also remark that it follows from \cite{Hir} that (duality formula)
\begin{equation}\label{duality}
\chi^p(X) = (-1)^n \chi^{n-p}(X),
\end{equation}
which plays an important role in this paper as we will see soon.
\section{HIrzebruch $\chi_y$-genera of fiber bundles}
\subsection{In the case of fiber bundle whose total complex dimension is odd}
\begin{thm} For a smooth compact complex algebraic variety of odd dimension $2n+1$, we have
\begin{align*}
\chi_y(X) & = \tau(X) \Bigl(1 +(-1)^{n+1}y^n \Bigr)  \Bigl(1 -(-1)^{n+1}y^{n+1} \Bigr) + (-1)^n \frac{\chi(X)}{2}y^n(1-y) +\\
& \hspace{5cm} \sum_{i=1}^{n-1} \chi^i(X) y^i\Bigl(1 - (-1)^{n-i}y^{n-i} \Bigr) \Bigl( 1 + (-1)^{n-i}y^{n-i+1}\Bigr) .
\end{align*}
\end{thm}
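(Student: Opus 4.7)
My plan is to unwind the definition of $\chi_y(X)$, apply the duality formula \eqref{duality} to collapse the sum to its ``first half,'' and then redistribute the middle term so that only the invariants $\tau(X)$, $\chi(X)$, and $\chi^i(X)$ for $1\le i\le n-1$ appear.

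Writing $N=2n+1$ for the complex dimension, the duality specializes to $\chi^p(X)=-\chi^{N-p}(X)$. Pairing the $p$-th summand of $\chi_y(X)=\sum_{p=0}^{N}\chi^p(X)y^p$ with its partner at index $N-p$ gives
\begin{equation*}
\chi_y(X) = \sum_{p=0}^{n} \chi^p(X)\,y^p\bigl(1-y^{N-2p}\bigr),
\end{equation*}
and separating off the $p=0$ and $p=n$ summands rewrites this as
\begin{equation*}
\chi_y(X) = \tau(X)(1-y^{N}) + \chi^n(X)\,y^n(1-y) + \sum_{i=1}^{n-1} \chi^i(X)\,y^i\bigl(1-y^{N-2i}\bigr).
\end{equation*}

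The crucial step is to eliminate $\chi^n(X)$, which does not appear in the statement. Applying the same pairing trick to $\chi(X)=\chi_{-1}(X)=\sum_{p=0}^{N}(-1)^p\chi^p(X)$ and using $\chi^{N-p}(X)=-\chi^p(X)$ together with $(-1)^{N-p}=-(-1)^p$ yields $\tfrac{\chi(X)}{2}=\sum_{p=0}^{n}(-1)^p\chi^p(X)$, hence
\begin{equation*}
\chi^n(X) = (-1)^n\left(\frac{\chi(X)}{2}-\tau(X)-\sum_{i=1}^{n-1}(-1)^i\chi^i(X)\right).
\end{equation*}

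Substituting this into $\chi^n(X)\,y^n(1-y)$ splits the middle term into three pieces. The $\tau(X)$-piece $(-1)^{n+1}\tau(X)\,y^n(1-y)$ merges with $\tau(X)(1-y^{N})$ through the elementary identity
\begin{equation*}
(1-y^{N}) + (-1)^{n+1}y^n(1-y) = \bigl(1+(-1)^{n+1}y^n\bigr)\bigl(1-(-1)^{n+1}y^{n+1}\bigr),
\end{equation*}
producing the first summand in the theorem. The $\chi(X)$-piece is already $(-1)^n\tfrac{\chi(X)}{2}\,y^n(1-y)$, the second summand. For each $i$ with $1\le i\le n-1$, the correction $(-1)^{n-i+1}\chi^i(X)\,y^n(1-y)$ merges with $\chi^i(X)\,y^i(1-y^{N-2i})$ via the mirror identity
\begin{equation*}
y^i(1-y^{N-2i}) + (-1)^{n-i+1} y^n(1-y) = y^i \bigl(1-(-1)^{n-i}y^{n-i}\bigr)\bigl(1+(-1)^{n-i}y^{n-i+1}\bigr),
\end{equation*}
reconstructing the $i$-th term of the final sum. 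The main obstacle is pure bookkeeping, namely keeping the exponents $n$, $i$, $n-i$ and the signs $(-1)^n$, $(-1)^i$, $(-1)^{n-i}$ straight through these rearrangements; no idea beyond the duality \eqref{duality} enters.
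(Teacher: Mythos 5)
Your proposal is correct and follows essentially the same route as the paper: fold the sum via the duality $\chi^p(X)=-\chi^{2n+1-p}(X)$, use $\chi(X)=\chi_{-1}(X)$ to solve for $\chi^n(X)$ in terms of $\tau(X)$, $\chi(X)$ and $\chi^i(X)$ for $1\le i\le n-1$, substitute, and regroup. The two factorization identities you record are exactly the collecting step the paper performs implicitly, and both check out.
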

It is a tedious computation, but for the sake of the reader we write down a proof.
\begin{proof} By the duality formula $\chi^i(X) = (-1)^{2n+1}\chi^{2n+1-i}(X) = -\chi^{2n+1-i}(X)$, we get
\begin{align*}
\chi_y(X) & = \tau(X) + \sum_{i=1}^n\chi^i(X)y^i + \sum_{i=n+1}^{2n+1}(-\chi^{2n+1-i}(X)) y^i\\
& = \tau(X) + \sum_{i=1}^n\chi^i(X)y^i -\sum_{j=1}^{n+1} \chi^{n+1-j}(X)y^{n+j}\\
& = \tau(X)(1 - y^{2n+1}) + \sum_{i=1}^n\chi^i(X)(y^i - y^{2n+1-i}).
\end{align*}
Then we have
\begin{equation}\label{chi}
\chi(X) = \chi_{-1}(X) = 2\tau(X) + \sum_{i=1}^{n} (-1)^i2\chi^i(X).
\end{equation}
Hence we have
$$(-1)^{n+1}2\chi^n(X) = 2\tau(X) - \chi(X) + \sum_{i=1}^{n-1} (-1)^i2\chi^i(X).$$
namely, we have
$$\chi^n(X) = (-1)^{n+1}\tau(X) + (-1)^n \frac{\chi(X)}{2} + \sum_{i=1}^{n-1} (-1)^{n-i-1}\chi^i(X).$$
Thus we get 
\begin{align*}
\chi_y(X) & = \tau(X)(1 - y^{2n+1}) + \sum_{i=1}^{n-1}\chi^i(X)(y^i - y^{2n+1-i}) + \\
& \hspace{2.6cm}\Bigl ((-1)^{n+1}\tau(X) + (-1)^n \frac{\chi(X)}{2} + \sum_{i=1}^{n-1} (-1)^{n-i-1}\chi^i(X) \Bigr) (y^n - y^{n+1})\\
& = \tau(X)\Bigl (1 - y^{2n+1} + (-1)^{n+1}y^n- (-1)^{n+1}y^{n+1} \Bigr ) +\\
& (-1)^n \frac{\chi(X)}{2}y^n(1-y) + \sum_{i=1}^{n-1}\chi^i(X) y^i \Bigl (1 - y^{2n+1-2i} -(-1)^{n-i}y^{n-i} - (-1)^{n-i+1}y^{n-i+1} \Bigr).
\end{align*}
From which we get the above formula in the theorem.
\end{proof}

Since the Hirzebruch $\chi_y$-genus $\chi_y(X)$ is an integral polynomial,i.e.$\chi_y(X) \in \mathbb Z[y]$, the appearance of $\frac{\chi(X)}{2}$ in the above formula or the above equation (\ref{chi}) implies the following corollary, which one could obtain directly without using the Hirzebruch $\chi_y$-genus, because it seems to be a well-known fact in topology of manifolds that the Euler-Poincar\'e characteristic of a compact orientable smooth manifold $M$ of real dimension $4k+2$ is always even (which follows from the Poincar\'e duality and $\op{dim}_{\mathbb R}H^{2k+1}(M, \mathbb R)$ being even): 
\begin{cor} Let $X$ be a smooth compact complex algebraic varieties of odd complex dimension. Then its Euler-Poincar\'e characteristic $\chi(X)$ is even.
\end{cor}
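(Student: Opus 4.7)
The plan is to observe that this corollary is an immediate consequence of equation (\ref{chi}) established during the proof of the theorem, so essentially no new work is required beyond citing that identity. Write $n$ for the integer with $\dim_{\mathbb{C}} X = 2n+1$. The theorem's proof produced
\[
\chi(X) \;=\; 2\tau(X) + \sum_{i=1}^{n} (-1)^{i} 2\chi^{i}(X) \;=\; 2\Bigl(\tau(X) + \sum_{i=1}^{n} (-1)^{i}\chi^{i}(X)\Bigr).
\]
Since each $\chi^{p}(X) = \sum_{i\geq 0}(-1)^{i}\dim_{\mathbb{C}} H^{i}(X,\Lambda^{p}T^{*}X)$ is an integer by definition, and $\tau(X)=\chi^{0}(X)$ is likewise an integer, the parenthesized quantity lies in $\mathbb{Z}$, and hence $\chi(X)\in 2\mathbb{Z}$.

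Alternatively, one can argue directly from the explicit formula in the theorem statement: the coefficient of $y^{n}$ (or equivalently $y^{n+1}$) on the right-hand side involves the term $(-1)^{n}\tfrac{\chi(X)}{2}$, while every other contribution to that coefficient is an integer (being an integer combination of $\tau(X)$ and the $\chi^{i}(X)$). Since $\chi_{y}(X)\in\mathbb{Z}[y]$, the rational number $\tfrac{\chi(X)}{2}$ must be an integer, forcing $\chi(X)$ to be even.

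There is essentially no obstacle here; the only thing to be careful about is that the identity (\ref{chi}) is a genuine consequence of the duality formula (\ref{duality}) applied to odd complex dimension $2n+1$ (which gives the sign $-1$ pairing $\chi^{i}$ with $\chi^{2n+1-i}$), and not of any deeper topological input. For this reason the author notes the redundancy with the classical observation that for a closed oriented real manifold of dimension $4k+2$ the middle Betti number $\dim_{\mathbb{R}} H^{2k+1}(M;\mathbb{R})$ is even (by the skew-symmetry of the Poincar\'e duality pairing), which already forces $\chi(M)$ to be even; but via the $\chi_{y}$-genus route the statement drops out essentially for free from equation (\ref{chi}).
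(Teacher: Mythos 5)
Your proposal is correct and follows essentially the same route as the paper: the author likewise deduces the corollary from the appearance of $\frac{\chi(X)}{2}$ in the theorem's formula (using $\chi_y(X)\in\mathbb{Z}[y]$) or, equivalently, directly from equation (\ref{chi}), and also notes the classical Poincar\'e-duality explanation. Nothing is missing.
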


\begin{cor} Let $E$ be a fiber $F$ bundle over a base $B$ such that $\op{dim}F + \op{dim}B = 2n+1$.
\begin{align*}
\chi_y(E)  -  & \chi_y(F) \chi_y(B) = \Bigl(\tau(E) - \tau(F) \tau(B) \Bigr) \Bigl(1 +(-1)^{n+1}y^n \Bigr)  \Bigl(1 -(-1)^{n+1}y^{n+1} \Bigr)  + \\
& \hspace{3cm}  \sum_{i=1}^{n-1} \Bigl (\chi^i(E) - \chi^i(F \times B) \Bigr) \Bigl(1 - (-1)^{n-i}y^{n-i} \Bigr) \Bigl( 1 + (-1)^{n-i}y^{n-i+1}\Bigr) .
\end{align*}
\end{cor}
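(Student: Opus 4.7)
The plan is to apply the preceding theorem to both $E$ and the product $F \times B$, which are both smooth compact complex algebraic varieties of odd complex dimension $2n+1$, and then subtract the two resulting expressions.

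First, I would write down the theorem applied to $X = E$:
\begin{align*}
\chi_y(E) & = \tau(E) \Bigl(1 +(-1)^{n+1}y^n \Bigr)  \Bigl(1 -(-1)^{n+1}y^{n+1} \Bigr) + (-1)^n \frac{\chi(E)}{2}y^n(1-y) \\
& \hspace{2cm} + \sum_{i=1}^{n-1} \chi^i(E) y^i\Bigl(1 - (-1)^{n-i}y^{n-i} \Bigr) \Bigl( 1 + (-1)^{n-i}y^{n-i+1}\Bigr) ,
\end{align*}
and the analogous expression for $X = F \times B$. By the multiplicativity of the $\chi_y$-genus under products we have $\chi_y(F \times B) = \chi_y(F)\chi_y(B)$, and correspondingly $\tau(F \times B) = \tau(F)\tau(B)$.

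Subtracting the two identities produces the claimed formula provided that the middle term $(-1)^n \tfrac{\chi(X)}{2} y^n(1-y)$ cancels out in the difference. For this I would invoke the classical fact, recalled in the introduction, that the Euler--Poincar\'e characteristic is multiplicative for \emph{any} topological fiber bundle, giving $\chi(E) = \chi(F)\chi(B) = \chi(F \times B)$. Therefore this term disappears, and the remaining pieces assemble into exactly the expression stated in the corollary.

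There is no real obstacle: the whole argument is a direct substitution. The only point worth checking carefully is the cancellation of the $\chi/2$ term, which relies essentially on the multiplicativity of $\chi$ for arbitrary fiber bundles (and not merely under a monodromy hypothesis). Everything else follows formally from the theorem together with $\chi_y(F \times B) = \chi_y(F)\chi_y(B)$.
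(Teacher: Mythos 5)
Your proof is correct and is exactly the intended argument: the paper states this corollary without proof as an immediate consequence of the preceding theorem, obtained by applying it to $E$ and to $F \times B$ and subtracting, with $\tau(F\times B)=\tau(F)\tau(B)$, $\chi^i(F\times B)$ as noted in the remark, and the $\frac{\chi}{2}$ term cancelling because $\chi(E)=\chi(F)\chi(B)=\chi(F\times B)$ for any fiber bundle. One small observation: carrying out the subtraction literally produces the factor $y^i\bigl(1-(-1)^{n-i}y^{n-i}\bigr)\bigl(1+(-1)^{n-i}y^{n-i+1}\bigr)$ in each summand, so the $y^i$ appearing in the theorem seems to have been dropped from the corollary's statement by a typo; your derivation gives the corrected form.
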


\begin{rem} In the above corollary, we note that
$$\chi^i(F \times B) = \sum_{j=0}^i\chi^j(F)\chi^{i-j}(B),$$
which follows from the multiplicativity $\chi_y(F \times B) = \chi_y(F)\chi_y(B)$.
\end{rem}

\begin{cor} In the case when $n=0$ in the above theorem, and let $g(X)$ be the genus of the curve $X$. we have
$$\tau(X) = \frac{\chi(X)}{2} = 1-g(X)  \quad \text{and} \quad \chi_y(X) = \tau(X)(1-y) = \frac{\chi(X)}{2}(1-y) = \left(1-g(X) \right )(1-y).$$
\end{cor}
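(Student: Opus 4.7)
The plan is to derive the claimed identities by specialising the formula of the theorem to $n=0$ and then matching the result against the standard invariants of a smooth projective curve.

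First I would substitute $n=0$ directly into the theorem. The summation $\sum_{i=1}^{n-1}$ becomes an empty sum (since the upper index is $-1$), so it vanishes. The leading factor $\bigl(1+(-1)^{n+1}y^n\bigr)$ becomes $1+(-1)^1 y^0 = 0$, which kills the $\tau(X)$-term entirely. What remains is
\[
\chi_y(X) = (-1)^0\,\frac{\chi(X)}{2}\,y^0(1-y) = \frac{\chi(X)}{2}(1-y),
\]
which is the middle equality in the statement.

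Next I would establish $\tau(X) = \chi(X)/2$ by invoking the duality formula in complex dimension $1$: setting $n=1,\ p=0$ in \eqref{duality} gives $\chi^0(X) = -\chi^1(X)$, so
\[
\chi(X) = \chi^0(X) - \chi^1(X) = 2\chi^0(X) = 2\tau(X).
\]
Combined with the previous step this yields $\chi_y(X) = \tau(X)(1-y)$.

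Finally, to bring in the genus I would use the classical identification of the Todd genus of a smooth projective curve with its arithmetic genus, namely
\[
\tau(X) = \chi^0(X) = \dim_{\mathbb{C}} H^0(X,\mathcal{O}_X) - \dim_{\mathbb{C}} H^1(X,\mathcal{O}_X) = 1 - g(X),
\]
which completes the chain of equalities in the corollary. No real obstacle arises: everything follows by direct substitution plus one invocation of duality and the classical formula $h^1(\mathcal{O}_X) = g(X)$ for a smooth curve.
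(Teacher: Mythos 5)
Your proof is correct and is exactly the intended derivation: the paper states this corollary without proof, and it follows just as you describe by setting $n=0$ in the theorem (which kills the $\tau$-term and the empty sum), using the duality formula in dimension one to get $\chi(X)=2\tau(X)$, and identifying $\tau(X)=\chi(X,\mathcal{O}_X)=1-g(X)$ for a connected smooth curve.
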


\begin{cor} In the case when $n=1$ in the above theorem, we have
$$\chi^1(X) = \tau(X) - \frac{\chi(X)}{2} \quad \text{and} \quad \chi_y(X) = \tau(X)(1+y)^2(1-y) -\frac{\chi(X)}{2}y(1-y).$$
\end{cor}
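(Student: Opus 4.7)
The plan is to specialize the main theorem and the auxiliary identity (\ref{chi}) to the value $n=1$, which corresponds to complex dimension $2n+1 = 3$. Both assertions of the corollary should drop out after elementary algebra, with no duality arguments beyond those already used in the theorem.

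First I would derive the formula $\chi^1(X) = \tau(X) - \frac{\chi(X)}{2}$ directly from the identity (\ref{chi}) in the proof of the theorem. Setting $n = 1$ in
$$\chi(X) = 2\tau(X) + \sum_{i=1}^{n} (-1)^i 2\chi^i(X)$$
collapses the sum to a single term and yields $\chi(X) = 2\tau(X) - 2\chi^1(X)$, from which the claimed expression for $\chi^1(X)$ is immediate.

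Next I would substitute $n=1$ into the closed form for $\chi_y(X)$ given by the theorem. The index range $i=1,\ldots,n-1$ becomes empty, so the summation term disappears. The remaining pieces simplify using $(-1)^{n+1} = 1$ and $(-1)^n = -1$: the leading factor becomes $\tau(X)(1+y)(1-y^2)$, which factors as $\tau(X)(1+y)^2(1-y)$, while the $\chi(X)$-term becomes $-\frac{\chi(X)}{2}y(1-y)$. Combining these produces the stated expression for $\chi_y(X)$.

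There is no real obstacle here; the only thing to be careful about is the sign bookkeeping $(-1)^{n+1}=1$ versus $(-1)^n=-1$ when $n=1$, and the factoring $1-y^2 = (1-y)(1+y)$ that puts the answer into the form displayed. As a sanity check, one may expand $\tau(X)(1+y)^2(1-y) - \frac{\chi(X)}{2}y(1-y)$ and verify that the coefficients of $y^0, y^1, y^2, y^3$ reproduce $\tau(X), \chi^1(X), -\chi^1(X), -\tau(X)$ respectively, which is consistent with the duality formula (\ref{duality}) in complex dimension $3$.
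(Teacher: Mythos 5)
Your proposal is correct and follows exactly the route the paper intends: the corollary is stated without proof as a direct specialization of the theorem, and your substitution of $n=1$ into the theorem's formula (with the empty sum, the signs $(-1)^{n+1}=1$, $(-1)^n=-1$, and the factorization $1-y^2=(1-y)(1+y)$) together with setting $n=1$ in equation (\ref{chi}) is precisely what is needed. The coefficient check against the duality formula is a nice confirmation but not required.
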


\begin{cor} Let $E$ be a fiber bundle with fiber $F$ and base $B$ such that $\op{dim} F=1$ and $\op{dim} B=2$ or $\op{dim} F=2$ and $\op{dim} B=1$, i.e., $E$ is a curve bundle over a surface or a surface bundle over a curve. 
\begin{enumerate}
\item The difference of the Hirzebruch $\chi_y$-genus is the following:
$$\chi_y(E) - \chi_y(F) \chi_y(B) = \Bigl (\tau(E) -\tau(F) \tau (B) \Bigr) (1+y)^2(1-y).$$
\item $\chi_y(E) =\chi_y(F) \chi_y(B)  \, \, (\forall y) \, \, \Longleftrightarrow \tau(E) =\tau(F)\tau (B).$
\end{enumerate}
\end{cor}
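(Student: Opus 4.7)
The plan is to specialize Corollary 3.2 (the fiber bundle formula in the odd total dimension case) to $n=1$, which is exactly the situation here since $\dim F + \dim B = 3 = 2n+1$ with $n=1$. With $n=1$, the summation $\sum_{i=1}^{n-1}\cdots = \sum_{i=1}^{0}\cdots$ is empty and disappears, so the entire right-hand side collapses to a single term containing $\tau(E) - \tau(F)\tau(B)$.

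Next, I would simplify the polynomial factor
$$\bigl(1 + (-1)^{n+1}y^n\bigr)\bigl(1 - (-1)^{n+1}y^{n+1}\bigr)$$
at $n=1$: it becomes $(1+y)(1-y^2) = (1+y)(1-y)(1+y) = (1+y)^2(1-y)$. Substituting this back gives part (1) directly.

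For part (2), the implication $(\Rightarrow)$ follows by plugging $y=0$ into (1), which yields $\tau(E) = \tau(F)\tau(B)$. The converse $(\Leftarrow)$ is immediate, since if $\tau(E) - \tau(F)\tau(B) = 0$ then the right-hand side of (1) vanishes identically in $y$.

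There is no real obstacle here; it is a pure specialization. The only thing to double-check is the parity sign $(-1)^{n+1}$ at $n=1$ and the vacuous nature of the summation, which together conspire to give the clean factorization $(1+y)^2(1-y)$. I would not need the duality formula or any additional input beyond the already-proved Corollary 3.2.
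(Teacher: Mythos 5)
Your proposal is correct and matches the paper's (implicit) derivation: the corollary is exactly the $n=1$ specialization of the preceding fiber-bundle difference formula for odd total dimension, where the sum $\sum_{i=1}^{n-1}$ is empty and $\bigl(1+(-1)^{2}y\bigr)\bigl(1-(-1)^{2}y^{2}\bigr)=(1+y)^{2}(1-y)$. Part (2) then follows as you say, by evaluating at $y=0$ for the forward direction and by the vanishing of the right-hand side for the converse.
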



\begin{cor} In the case when $n=2$ in the above theorem, we have
$$\chi_y(X) = \tau(X)(1-y^2)(1+y^3) + \chi^1(X)y(1+y)^2(1-y)+ \frac{\chi(X)}{2} y^2(1-y).$$
\end{cor}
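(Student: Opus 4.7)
The plan is to deduce the corollary as a direct specialization of the theorem at $n=2$, i.e.\ $\dim_{\mathbb C} X = 2n+1 = 5$, so that no new content needs to be proved; everything reduces to substituting $n=2$ into the three terms of the formula
\[
\chi_y(X) = \tau(X)\bigl(1 +(-1)^{n+1}y^n\bigr)\bigl(1 -(-1)^{n+1}y^{n+1}\bigr) + (-1)^n \tfrac{\chi(X)}{2}y^n(1-y) + \sum_{i=1}^{n-1}\chi^i(X)\,y^i\bigl(1-(-1)^{n-i}y^{n-i}\bigr)\bigl(1+(-1)^{n-i}y^{n-i+1}\bigr)
\]
proved in the theorem, and then doing some elementary polynomial bookkeeping.

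First I would compute the $\tau(X)$-coefficient: with $n=2$ we have $(-1)^{n+1}=-1$, so the factor becomes $(1-y^2)(1+y^3)$, matching the statement. Next, for the $\chi(X)/2$-coefficient, $(-1)^n = +1$, so the term is exactly $\tfrac{\chi(X)}{2}y^2(1-y)$. Finally the summation collapses to the single index $i=1$ (since the range is $1 \le i \le n-1 = 1$), producing
\[
\chi^1(X)\,y\bigl(1-(-1)^{1}y^{1}\bigr)\bigl(1+(-1)^{1}y^{2}\bigr) = \chi^1(X)\,y(1+y)(1-y^2).
\]
The only piece of algebra worth noting is the factorization $(1+y)(1-y^2) = (1+y)^2(1-y)$, which rewrites this last expression as $\chi^1(X)\,y(1+y)^2(1-y)$, as desired.

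Adding the three specialized terms yields precisely the displayed identity, completing the proof. The main (and essentially only) obstacle is making sure the signs $(-1)^{n+1}$, $(-1)^n$, and $(-1)^{n-i}$ are tracked correctly at $n=2$, $i=1$; beyond that the corollary is a direct substitution and a one-line factoring.
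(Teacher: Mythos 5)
Your proposal is correct and matches the paper's (implicit) treatment: the corollary is stated without proof as a direct specialization of the theorem at $n=2$, and your sign-tracking at $n=2$, $i=1$ together with the factorization $(1+y)(1-y^2)=(1+y)^2(1-y)$ is exactly the required verification.
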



\begin{cor} Let $E$ be a fiber bundle with fiber $F$ and base $B$ such that $\op{dim} F=1$ and $\op{dim} B=4$, $\op{dim} F=2$ and $\op{dim} B=3$, $\op{dim} F=3$ and $\op{dim} B=2$, or $\op{dim} F=4$ and $\op{dim} B=1$, i.e., $E$ is a curve bundle over a $4$-fold, a surface bundle over a $3$-fold, a $3$-fold bundle over a surface, or a $4$-fold bundle over a curve. 
\begin{enumerate}
\item The difference of the Hirzebruch $\chi_y$-genus is the following:
\begin{align*}
\chi_y(E) - \chi_y(F)\chi_y(B) & = \Bigl (\tau(E) -\tau(F)\tau (B) \Bigr )(1-y^2)(1+y^3) + \\
& \hspace{1.3cm}\Bigl (\chi^1(E) - \tau(F)\chi^1(B) - \chi^1(F)  \tau(B) \Bigr) y(1+y)^2(1-y) .
\end{align*}
\item $\chi_y(E) =\chi_y(F)\chi_y(B) (\forall y)  \Leftrightarrow \tau(E) =\tau(F)\tau (B) \, \text{and} \,  \chi^1(E) = \tau(F)\chi^1(B) + \chi^1(F)  \tau(B).$
\end{enumerate}
\end{cor}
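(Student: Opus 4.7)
The plan is to apply the preceding corollary (the $n=2$ case of the theorem, valid for complex dimension $2n+1=5$) twice: once to the total space $E$, and once to the product $F\times B$, which is also a smooth compact complex algebraic variety of complex dimension $5$. Subtracting the two expressions, and using $\chi_y(F\times B) = \chi_y(F)\chi_y(B)$ (multiplicativity of $\chi_y$ on products), I arrive at
\begin{align*}
\chi_y(E) - \chi_y(F)\chi_y(B) &= \Bigl(\tau(E) - \tau(F\times B)\Bigr)(1-y^2)(1+y^3) \\
&\quad {}+ \Bigl(\chi^1(E) - \chi^1(F\times B)\Bigr)\, y(1+y)^2(1-y) \\
&\quad {}+ \frac{\chi(E) - \chi(F\times B)}{2}\, y^2(1-y).
\end{align*}

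The next step is to rewrite each of the three differences in terms of invariants of $F$ and $B$ separately. Multiplicativity of $\chi_y$ on products gives $\tau(F\times B) = \tau(F)\tau(B)$, and the K\"unneth-type identity noted in the remark above yields $\chi^1(F\times B) = \tau(F)\chi^1(B) + \chi^1(F)\tau(B)$. The crucial point is that the Euler--Poincar\'e characteristic is multiplicative for \emph{any} topological fibre bundle, so $\chi(E) = \chi(F)\chi(B) = \chi(F\times B)$ and the entire $y^2(1-y)$ term drops out. Substituting these identities produces the formula asserted in part~(1).

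For part~(2), the direction ``$\Leftarrow$'' is immediate from (1). For ``$\Rightarrow$'', suppose the right hand side of (1) is the zero polynomial in $y$. The claim then follows from the linear independence of the two polynomials $p(y) = (1-y^2)(1+y^3)$ and $q(y) = y(1+y)^2(1-y)$ over $\mathbb{Q}$: a direct expansion shows that $p$ has degree $5$ with leading coefficient $-1$ whereas $q$ has degree only $4$, so comparing the coefficient of $y^5$ forces $\tau(E) = \tau(F)\tau(B)$, and then comparing any remaining monomial where $q$ is nonzero forces $\chi^1(E) = \tau(F)\chi^1(B) + \chi^1(F)\tau(B)$.

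There is no genuine obstacle: the argument consists of one substitution, one cancellation, and a linear-independence check for two explicit polynomials. The one non-formal step worth flagging is the vanishing of the $\chi(E) - \chi(F\times B)$ term, which relies on multiplicativity of $\chi$ on arbitrary fibre bundles (not just on products); this is exactly what allows part~(2) to be expressed as two scalar identities instead of three.
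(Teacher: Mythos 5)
Your proposal is correct and follows essentially the same route as the paper: specialize the odd-dimension theorem (the $n=2$ case) to both $E$ and $F\times B$, cancel the Euler-characteristic term using multiplicativity of $\chi$ for arbitrary fibre bundles, rewrite $\tau(F\times B)$ and $\chi^1(F\times B)$ via multiplicativity of $\chi_y$ on products, and deduce part (2) from the linear independence of $(1-y^2)(1+y^3)$ and $y(1+y)^2(1-y)$. Nothing to add.
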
 


\subsection{In the case of fiber bundles whose total complex dimension is even}

Here we consider the case of $\op{dim}_{\mathbb C}X =2n$.

By the duality formula (\ref{duality}) at the end of \S2, we have

\begin{align}\label{eq0}
\chi_y(X) & = \sum_{i=0}^{n-1}\chi^i(X) (y^i + y^{2n-i}) + \chi^n(X)y^n \\
& =  \sum_{i=0}^{n-1}\chi^i(X) y^i \Bigl (1+ y^{2n-2i} \Bigr) + \chi^n(X)y^n \label{eq0}
\end{align}
Thus we have
$$\qquad \qquad \quad \, \, \chi(X) = \chi_{-1}(X) = \sum_{i=0}^{n-1}(-1)^i2\chi^i(X)+ (-1)^n \chi^n(X),$$
$$\sigma(X) =\chi_1(X) = \sum_{i=0}^{n-1} 2\chi^i(X) + \chi^n(X).$$
Therefore we have
\begin{equation}\label{eq1}
\sigma(X) + \chi(X) = \sum_{i=0}^{n-1} 2\Bigl(1 + (-1)^i \Bigr) \chi^i(X)+ \Bigl (1 + (-1)^n \Bigr) \chi^n(X),
\end{equation}
\begin{equation}\label{eq2}
\, \, \, \, \, \, \, \, \, \, \, \sigma(X) - \chi(X) = \sum_{i=0}^{n-1} 2\Bigl(1 + (-1)^{i+1} \Bigr) \chi^i(X)+ \Bigl (1 + (-1)^{n+1} \Bigr) \chi^n(X).
\end{equation}
In order to continue further, we need to consider the case when $n=2k$ is even and the case when $n=2k+1$ is odd, i.e., $2n=4k, 4k+2$.

\begin{thm}
In the case when $\op{dim}_{\mathbb C}X = 4k$
\begin{align*}
\chi_y(X) = & \tau(X) (1-y^{2k})^2 + \frac{\sigma(X)}{4} y^{2k-1}(1+y)^2 - \frac{\chi(X)}{4} y^{2k-1}(1-y)^ 2 +\\
& \hspace{3.5cm} \sum_{j=1}^{k-1}\chi^{2j}(X) y^{2j} (1 - y^{2k-2j})^2 +\\
& \hspace{4cm}  \sum_{j=1}^{k-1}\chi^{2j-1}(X) y^{2j-1} (1-y^{2k-2j})(1-y^{2k-2j+2}).
\end{align*}
\end{thm}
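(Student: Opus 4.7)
The plan is to mirror the structure of the odd-dimensional proof: start from the rewriting of $\chi_y(X)$ in equation (\ref{eq0}) with $n=2k$, use equations (\ref{eq1}) and (\ref{eq2}) to eliminate the two ``top'' sheaf Euler characteristics $\chi^{2k-1}(X)$ and $\chi^{2k}(X)$ in favor of $\tau(X)$, $\sigma(X)$, $\chi(X)$ and the middle $\chi^{2j}(X)$, $\chi^{2j-1}(X)$ for $1\le j\le k-1$, and finally collapse the resulting binomial expressions.

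First, specializing (\ref{eq0}) to $n = 2k$ and separating the sum by parity of $i$, I isolate the two terms I want to eliminate:
\begin{align*}
\chi_y(X) &= \tau(X)(1 + y^{4k}) + \sum_{j=1}^{k-1}\chi^{2j}(X)\, y^{2j}(1+y^{4k-4j}) \\
& \quad + \sum_{j=1}^{k-1}\chi^{2j-1}(X)\, y^{2j-1}(1+y^{4k-4j+2}) + \chi^{2k-1}(X)\, y^{2k-1}(1+y^2) + \chi^{2k}(X)\, y^{2k}.
\end{align*}

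Second, with $n=2k$ even, the factors $1+(-1)^i$ in (\ref{eq1}) kill the odd-index terms and the factors $1+(-1)^{i+1}$ in (\ref{eq2}) kill the even-index terms, so (\ref{eq1}) and (\ref{eq2}) reduce to
$$\sigma(X)+\chi(X) = 4\tau(X) + 4\sum_{j=1}^{k-1}\chi^{2j}(X) + 2\chi^{2k}(X),\qquad \sigma(X)-\chi(X) = 4\sum_{j=1}^{k-1}\chi^{2j-1}(X) + 4\chi^{2k-1}(X).$$
These solve immediately for $\chi^{2k}(X)$ and $\chi^{2k-1}(X)$.

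Third, I substitute back and group by coefficient. The $\tau(X)$ coefficient becomes $1+y^{4k}-2y^{2k}=(1-y^{2k})^2$; the coefficient of each $\chi^{2j}(X)$ becomes $y^{2j}(1+y^{4k-4j})-2y^{2k}=y^{2j}(1-y^{2k-2j})^2$; and the coefficient of each $\chi^{2j-1}(X)$ rearranges as $y^{2j-1}+y^{4k-2j+1}-y^{2k-1}-y^{2k+1}=y^{2j-1}(1-y^{2k-2j})(1-y^{2k-2j+2})$. The residual contribution $\tfrac{\sigma(X)+\chi(X)}{2}y^{2k}+\tfrac{\sigma(X)-\chi(X)}{4}y^{2k-1}(1+y^2)$ then regroups as $\tfrac{\sigma(X)}{4}y^{2k-1}(1+y)^2-\tfrac{\chi(X)}{4}y^{2k-1}(1-y)^2$ by expanding $(1\pm y)^2=1\pm2y+y^2$.

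There is no genuine conceptual obstacle; everything is a careful algebraic rearrangement. The main place where care is needed is the boundary term $j=k$ of the odd-index sum, which has to be peeled off as $\chi^{2k-1}(X)\,y^{2k-1}(1+y^2)$ \emph{before} elimination, so that replacing $\chi^{2k-1}(X)$ does not disturb the surviving $j<k$ odd-index summands. Bookkeeping of exponents in the binomial collapses $y^{2j}(1+y^{4k-4j})-2y^{2k}=y^{2j}(1-y^{2k-2j})^2$ and the verification that the $\sigma,\chi$ residual really matches $\tfrac{\sigma}{4}y^{2k-1}(1+y)^2-\tfrac{\chi}{4}y^{2k-1}(1-y)^2$ are the only steps where a sign slip could creep in.
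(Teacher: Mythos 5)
Your proposal is correct and follows essentially the same route as the paper: specialize the duality-based expansion (\ref{eq0}) to $n=2k$, peel off the $\chi^{2k-1}$ and $\chi^{2k}$ terms, eliminate them via the parity-reduced forms of (\ref{eq1}) and (\ref{eq2}) (the paper's (\ref{eq3.-1})--(\ref{eq4})), and regroup; your checks of the binomial collapses and of the $\sigma,\chi$ residual all verify.
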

\begin{proof} It follows from (\ref{eq1}) and (\ref{eq2}) that we have
\begin{equation}\label{eq3.-1}
\sigma(X) + \chi(X) = 4\tau(X) + 4\sum_{j=1}^{k-1}\chi^{2j}(X)+ 2\chi^{2k}(X),
\end{equation}
\begin{equation}\label{eq3.0}
\sigma(X) - \chi(X) = 4\sum_{j=1}^k \chi^{2j-1}(X).\hspace{3cm}
\end{equation}
From which we get the following:
\begin{equation}\label{eq3}
\chi^{2k}(X) = \frac{\sigma(X) + \chi(X)}{2} - 2\tau(X) - 2\sum_{j=1}^{k-1}\chi^{2j}(X),
\end{equation}
\begin{equation}\label{eq4}
\chi^{2k-1}(X) = \frac{\sigma(X) - \chi(X)}{4} - \sum_{j=1}^{k-1}\chi^{2j-1}(X). \hspace{1.5cm}
\end{equation}

Since $n=2k$, by singling out the last two terms of degree $2k-1$ and $2k$ of (\ref{eq0}), we have
\begin{equation}\label{eq5}
\chi_y(X) = \sum_{i=0}^{2k-2}\chi^i(X)y^i \Bigl(1 + y^{4k-2i} \Bigr) + \chi^{2k-1}(X)y^{2k-1}(1+y^2) + \chi^{2k}(X)y^{2k}.
\end{equation}
By plugging (\ref{eq3}) and (\ref{eq4}) into the above formula (\ref{eq5}), we get
\begin{align*}
\chi_y(X) & = \tau(X)\Bigl(1 + y^{4k} \Bigr) +  \sum_{i=1}^{2k-2}\chi^i(X)y^i \Bigl(1 + y^{4k-2i} \Bigr)  + \\
& \hspace{3cm}\Bigl (\frac{\sigma(X) - \chi(X)}{4} - \sum_{j=1}^{k-1}\chi^{2j-1}(X) \Bigr) y^{2k-1}(1+y^2)  + \\
& \hspace{4.5cm}\Bigl(\frac{\sigma(X) + \chi(X)}{2} - 2\tau(X) - 2\sum_{j=1}^{k-1}\chi^{2j}(X) \Bigr) y^{2k}.
\end{align*}
Then, by cleaning them out with respect to $\tau(X), \sigma(X), \chi(X)$, the ``even part" $\chi^{2j}(X)$ and the ``odd part" $\chi^{2j-1}(X)$, we get the formula in the theorem.
\end{proof}

\begin{thm}
In the case when $\op{dim}_{\mathbb C}X = 4k+2$
\begin{align*}
\chi_y(X) = & \tau(X) (1-y^{2k})(1- y^{2k+2}) + \frac{\sigma(X)}{4} y^{2k}(1 +y)^2+
\frac{\chi(X)}{4} y^{2k}(1-y)^2 +\\
& \hspace{3cm}  \sum_{j=1}^{k-1}\chi^{2j}(X) y^{2j}(1- y^{2k-2j})(1-y^{2k-2j+2}) +\\
& \hspace{4.5cm} \sum_{j=1}^k\chi^{2j-1}(X) y^{2j-1} (1 - y^{2k-2j+2})^2.
\end{align*}
\end{thm}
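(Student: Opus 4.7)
The plan is to parallel the proof of Theorem 3.8 but for the odd case $n = 2k+1$, which introduces several sign changes compared to the $4k$-dimensional argument.

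First, specialize identities (\ref{eq1}) and (\ref{eq2}) to $n = 2k+1$. Since $1+(-1)^{2k+1}=0$ and $1+(-1)^{2k+2}=2$, the self-dual middle term $\chi^{2k+1}(X)$ drops out of $\sigma(X)+\chi(X)$ and appears with coefficient $2$ in $\sigma(X)-\chi(X)$, exactly reversing the situation of Theorem 3.8. Combined with the parity of the $(1+(-1)^i)$ factors on the remaining indices, this yields
\[
\sigma(X)+\chi(X)=4\tau(X)+4\sum_{j=1}^{k}\chi^{2j}(X),\qquad \sigma(X)-\chi(X)=4\sum_{j=1}^{k}\chi^{2j-1}(X)+2\chi^{2k+1}(X).
\]
Solving these for the two top sheaf Euler characteristics gives $\chi^{2k}(X)$ and $\chi^{2k+1}(X)$ as explicit linear combinations of $\tau,\sigma,\chi$ and the lower $\chi^{2j}$'s or $\chi^{2j-1}$'s.

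Next, specialize (\ref{eq0}) to $n=2k+1$ and isolate the solitary middle term $\chi^{2k+1}(X)y^{2k+1}$ (which has no dual partner in the sum) along with the $i=0$ and $i=2k$ contributions, leaving middle sums $\sum_{j=1}^{k-1}\chi^{2j}(X)y^{2j}(1+y^{4k+2-4j})$ and $\sum_{j=1}^{k}\chi^{2j-1}(X)y^{2j-1}(1+y^{4k+4-4j})$. Substitute the solved formulas for $\chi^{2k}(X)$ and $\chi^{2k+1}(X)$, then collect the coefficients of $\tau(X),\sigma(X),\chi(X),\chi^{2j}(X)$, and $\chi^{2j-1}(X)$ separately.

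The main technical step is recognizing the factorizations that produce the claimed closed form: the raw $\tau$ coefficient $1-y^{2k}-y^{2k+2}+y^{4k+2}$ factors as $(1-y^{2k})(1-y^{2k+2})$; the $\sigma/4$ and $\chi/4$ contributions combine into $y^{2k}(1+y)^2$ and $y^{2k}(1-y)^2$ respectively (both now carrying plus signs, unlike Theorem 3.8 where the $\chi$ term is negative, because the solitary self-dual term here has odd degree rather than even, so that $\chi^{2k+1}y^{2k+1}$ merges with $\chi^{2k}y^{2k}(1+y^2)$ as $y^{2k}\bigl[\tfrac{\sigma}{4}(1+y)^2+\tfrac{\chi}{4}(1-y)^2\bigr]$); each middle $\chi^{2j}$ coefficient collapses to $(1-y^{2k-2j})(1-y^{2k-2j+2})$; and each middle $\chi^{2j-1}$ coefficient collapses to $(1-y^{2k-2j+2})^2$. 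The only real obstacle is careful bookkeeping of the shifted exponent ranges, since the parity swap moves the special degree from $2k-1$ to $2k$ and extends the range of the odd middle sum up to $j=k$ rather than $j=k-1$.
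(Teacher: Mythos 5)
Your proposal is correct and follows essentially the same route as the paper: specialize the $\sigma\pm\chi$ identities to $n=2k+1$ to solve for $\chi^{2k}(X)$ and $\chi^{2k+1}(X)$, substitute into the duality expansion of $\chi_y(X)$ with the top two degrees singled out, and collect coefficients into the stated factorizations (all of which you identify correctly, including the sign flip on the $\chi(X)/4$ term relative to the $4k$ case). No gaps.
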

\begin{proof}
Since $n=2k+1$, it follows from (\ref{eq1}) and (\ref{eq2}) that we have
\begin{equation}\label{eq6.-1}
\sigma(X) + \chi(X) = 4\tau(X) + 4\sum_{j=1}^{k}\chi^{2j}(X) \hspace{1cm} 
\end{equation}
\begin{equation}\label{eq6.0}
\sigma(X) - \chi(X) = 4\sum_{j=1}^k \chi^{2j-1}(X) + 2\chi^{2k+1}(X).
\end{equation}
From which we get the following:
\begin{equation}\label{eq6}
\chi^{2k}(X) = \frac{\sigma(X) + \chi(X)}{4} - \tau(X) - \sum_{j=1}^{k-1}\chi^{2j}(X), \hspace{1cm}
\end{equation}
\begin{equation}\label{eq7}
\chi^{2k+1}(X) = \frac{\sigma(X) - \chi(X)}{2} - 2\sum_{j=1}^k \chi^{2j-1}(X).\hspace{2 cm} 
\end{equation}

Since $n=2k+1$, by singling out the last two terms of degree $2k$ and $2k+1$ of (\ref{eq0}), we have
\begin{equation}\label{eq8}
\chi_y(X) = \sum_{i=0}^{2k-1}\chi^i(X)y^i \Bigl(1 + y^{4k+2-2i} \Bigr) + \chi^{2k}(X)y^{2k}(1+y^2) + \chi^{2k+1}(X)y^{2k+1}.
\end{equation}
By plugging (\ref{eq6}) and (\ref{eq7}) into the above formula (\ref{eq8}), we get
\begin{align*}
\chi_y(X) & = \tau(X)\Bigl(1 + y^{4k+2} \Bigr) +  \sum_{i=1}^{2k-2}\chi^i(X)y^i \Bigl(1 + y^{4k+2-2i} \Bigr)  + \\
& \hspace{3cm} \Bigl (\frac{\sigma(X) + \chi(X)}{4} - \tau(X) -\sum_{j=1}^{k-1}\chi^{2j}(X)  \Bigr) y^{2k}(1+y^2)  + \\
& \hspace{6cm} \Bigl(\frac{\sigma(X) - \chi(X)}{2} - 2\sum_{j=1}^k \chi^{2j-1}(X) \Bigr) y^{2k+1}.
\end{align*}
Then, by cleaning them out with respect to $\tau(X), \sigma(X), \chi(X)$, the ``even part" $\chi^{2j}(X)$ and the ``odd part" $\chi^{2j-1}(X)$, we get the formula in the theorem.

\end{proof}

\begin{cor} 
(1) If $E$ is a fiber $F$ bundle over a base $B$ such that $\op{dim}_{\mathbb C}F + \op{dim}_{\mathbb C}B = 4k$, then the difference of the Hirzebruch $\chi_y$-genus is 
\begin{align*}
\chi_y(E) -& \chi_y(F) \chi_y(B) = \Bigl (\tau(E)-\tau(F) \tau(B)\Bigr)  (1-y^{2k})^2  + \\
& \hspace{4cm}  \frac{\sigma(E) -\sigma(F) \sigma(B)}{4} y^{2k-1}(1+y)^2 +\\
& \hspace{2cm} \sum_{j=1}^{k-1}\Bigl(\chi^{2j}(E) - \chi^{2j}(F \times B) \Bigr) y^{2j} (1 - y^{2k-2j})^2 +\\
& \hspace{1.5cm}  \sum_{j=1}^{k-1}\Bigl(\chi^{2j-1}(E) - \chi^{2j-1}(F \times B) \Bigr) y^{2j-1} (1 - y^{2k-2j})(1-y^{2k-2j+2}).
\end{align*}

(2) If $E$ is a fiber $F$ bundle over a base $B$ such that $\op{dim}_{\mathbb C}F + \op{dim}_{\mathbb C}B = 4k+2$, then the difference of the Hirzebruch $\chi_y$-genus is 
\begin{align*}
\chi_y(E) -& \chi_y(F)\chi_y(B) = \Bigl (\tau(E)-\tau(F) \tau(B)\Bigr) (1-y^{2k})(1- y^{2k+2}) + \\
& \hspace{3cm} \frac{\sigma(E) -\sigma(F)\sigma(B)}{4} y^{2k}(1 +y)^2 +\\
& \hspace{1.5cm} \sum_{j=1}^{k-1}\Bigl(\chi^{2j}(E) - \chi^{2j}(F \times B) \Bigr) y^{2j}(1- y^{2k-2j})(1 - y^{2k-2j+2}) +\\
& \hspace{3cm}  \sum_{j=1}^{k-1}\Bigl(\chi^{2j-1}(E) - \chi^{2j-1}(F \times B) \Bigr) y^{2j-1} (1 - y^{2k-2j+2})^2.
\end{align*}
\end{cor}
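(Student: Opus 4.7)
The plan is to deduce the corollary directly from the two preceding theorems by applying each theorem twice: once to the total space $E$ and once to the product $F \times B$, and then taking the difference. The crucial observation is that $F \times B$ has the same complex dimension as $E$ (either $4k$ or $4k+2$), so the same formula applies to both, and the multiplicativity $\chi_y(F \times B) = \chi_y(F)\chi_y(B)$ allows us to replace $\chi_y(F \times B)$ by $\chi_y(F)\chi_y(B)$ on the left-hand side of the subtraction.

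Concretely, I would first write down the theorem formula for $X = E$ and the theorem formula for $X = F \times B$ in the appropriate parity case. Since both formulas have the same polynomial structure in $y$, subtracting them term-by-term produces an expression whose coefficients are exactly the differences $\tau(E) - \tau(F\times B)$, $\sigma(E) - \sigma(F\times B)$, $\chi(E) - \chi(F\times B)$, and $\chi^i(E) - \chi^i(F\times B)$, with the same polynomial factors in $y$ as in the theorem. Using multiplicativity, we have $\tau(F\times B) = \tau(F)\tau(B)$ and $\sigma(F\times B) = \sigma(F)\sigma(B)$, which converts the first two differences to the forms stated in the corollary.

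The key step — and what makes the final formula come out cleanly, without a $\chi$-term — is the fact that the Euler--Poincar\'e characteristic is multiplicative for \emph{any} topological fiber bundle, so $\chi(E) = \chi(F)\chi(B) = \chi(F\times B)$. Hence the coefficient of the $\chi$-term in the difference simply vanishes, which is why in part (1) the term $\tfrac{\chi(X)}{4} y^{2k-1}(1-y)^2$ and in part (2) the term $\tfrac{\chi(X)}{4} y^{2k}(1-y)^2$ from the single-space formula do not appear in the difference. This also explains why no analogous multiplicative identity is available for $\tau$ or $\sigma$, so their difference terms do survive.

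The remaining terms are the finite sums $\sum_{j}(\chi^{2j}(E) - \chi^{2j}(F\times B))(\ldots)$ and $\sum_{j}(\chi^{2j-1}(E) - \chi^{2j-1}(F\times B))(\ldots)$ with exactly the polynomial factors appearing in the theorems. I do not expect any genuine obstacle here; the computation is purely formal, and the only real ``idea'' is the cancellation of the $\chi$-term via topological multiplicativity. The rest is simply matching indices and verifying that the summation ranges in the two theorems (which go up to $j = k-1$) are preserved under subtraction.
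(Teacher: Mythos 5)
Your proposal is correct and is exactly the argument the paper intends: the corollary appears immediately after the two theorems with no separate proof precisely because one applies each theorem to $E$ and to $F\times B$ (which have the same complex dimension), subtracts, and uses $\chi_y(F\times B)=\chi_y(F)\chi_y(B)$ together with the topological multiplicativity $\chi(E)=\chi(F)\chi(B)$ to cancel the $\chi$-terms, while $\tau$ and $\sigma$ are recovered as $\chi_0$ and $\chi_1$ of the product. One small index point: in the $4k+2$ theorem the odd-index sum runs to $j=k$ (not $j=k-1$ as you state), so the subtraction actually produces that same range; the upper limit $k-1$ printed in the last sum of part (2) of the corollary appears to be a typo rather than something your argument needs to explain away.
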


Since the Hirzebruch $\chi_y$-genus is an integral polynomial, from the above formulae we get the following theorem, which is compatible with the main result of Hambleton-Korzeniewski-Ranicki \cite{HKR}:
\begin{thm} Let $E$ be a fiber $F$ bundle over a base $B$. Then the following always hold: 
$$\sigma(E) \equiv \sigma(F)\sigma(B) \, \, \op{mod} \, \, 4 .$$
\end{thm}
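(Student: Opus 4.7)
The plan is to derive the congruence directly from the explicit polynomial identities in the preceding corollary, exploiting the fact that $\chi_y(E)$, $\chi_y(F)$, and $\chi_y(B)$ all lie in $\mathbb{Z}[y]$, hence so does $\chi_y(E)-\chi_y(F)\chi_y(B)$.

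First I would dispose of the odd total dimension case. If $\dim_{\mathbb{C}} E = \dim_{\mathbb{C}} F + \dim_{\mathbb{C}} B$ is odd, then $\dim_{\mathbb{R}} E \equiv 2 \pmod 4$, so $\sigma(E)=0$ by convention; moreover, exactly one of $\dim_{\mathbb{C}} F$, $\dim_{\mathbb{C}} B$ is odd, so $\sigma(F)\sigma(B)=0$ as well, and the congruence is trivial.

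Next I would treat the two even subcases given by the corollary. In the $\dim_{\mathbb{C}} E = 4k$ case, the explicit formula contains the term $\tfrac{\sigma(E)-\sigma(F)\sigma(B)}{4}\, y^{2k-1}(1+y)^2$. The key observation is that the remaining summands, namely $(\tau(E)-\tau(F)\tau(B))(1-y^{2k})^2$ and the ``even part'' sum $\sum_j(\chi^{2j}(E)-\chi^{2j}(F\times B))y^{2j}(1-y^{2k-2j})^2$, contain only even powers of $y$, while the ``odd part'' sum $\sum_j(\chi^{2j-1}(E)-\chi^{2j-1}(F\times B))y^{2j-1}(1-y^{2k-2j})(1-y^{2k-2j+2})$ contributes only odd powers with \emph{integer} coefficients. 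Extracting the coefficient of $y^{2k-1}$ from the polynomial identity therefore expresses $\tfrac{\sigma(E)-\sigma(F)\sigma(B)}{4}$ as an integer plus integer corrections, forcing it to be an integer. The $\dim_{\mathbb{C}} E = 4k+2$ case is handled analogously by reading off the coefficient of $y^{2k}$ (or equivalently $y^{2k+2}$): the $\sigma$-term contributes $\tfrac{\sigma(E)-\sigma(F)\sigma(B)}{4}$ there, while the $\tau$-term and the even-part sum contribute integer amounts and the odd-part sum contributes nothing to that coefficient.

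The only point that needs care is the bookkeeping of which powers of $y$ appear in each summand of the corollary — one must verify that the $\tau$-part and even-indexed part produce only powers of one parity, while the $\sigma$-part spans both parities via the factor $(1+y)^2$. Once that parity separation is checked, the integrality of the extracted coefficient gives $\sigma(E)\equiv\sigma(F)\sigma(B)\pmod 4$ immediately. I do not expect any serious obstacle; the arithmetic is routine once the correct coefficient of $y$ is isolated in each case.
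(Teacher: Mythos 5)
Your proposal is correct and follows essentially the same route as the paper, which simply notes that since $\chi_y(E)-\chi_y(F)\chi_y(B)\in\mathbb{Z}[y]$, the coefficient $\frac{\sigma(E)-\sigma(F)\sigma(B)}{4}$ appearing in the preceding corollary's formulae must be an integer. Your coefficient-extraction bookkeeping (the $y^{2k-1}$ coefficient in the $4k$ case, the $y^{2k}$ coefficient in the $4k+2$ case, with the parity separation of the $\tau$-part and the even/odd sums) correctly supplies the detail the paper leaves implicit.
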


\begin{rem} In the case when $\op{dim}_{\mathbb C}E$ is odd, either $\op{dim}_{\mathbb C} F$ or $\op{dim}_{\mathbb C} B$ is odd, thus by the definition of the signature $\sigma(E) = \sigma(F) \sigma(B) =0$, thus the above formula is trivial. So the above theorem is in fact for the case when $\op{dim}_{\mathbb C}E$ is even.
\end{rem}

\begin{rem}
In the above we try to give explicit formulae for the Hirzebruch $\chi_y$-genera $\chi_y(X)$ and the difference $\chi_y(E) -\chi_y(F)\chi_y(B)$ for a fiber bundle $F \hookrightarrow  E \to B$, and as a byproduct we get the multiplicativity of the signature modulo $4$. In fact, as far as such congruent expressions are concerned, using the formulae such as (\ref{eq3.-1}), (\ref{eq3.0}), (\ref{eq6.-1}), (\ref{eq6.0}) used in the above proof, we can get the following general congruent expressions.
\begin{enumerate}
\item Let $X$ be a smooth compact complex algebraic variety of complex dimension $4k$. Then it follows from (\ref{eq3.0}) that 
$$\sigma(X) - \chi(X)  \equiv 0  \,\, \op{mod} 4, \,\, \text{i.e.} \,\, \sigma(X) \equiv \chi(X) \,\, \op{mod} 4.$$
If we use (\ref{eq3.-1}) we get a weaker one:
$$\sigma(X) + \chi(X)  \equiv 0 \,\, \op{mod} 2, \,\, \text{i.e.} \,\, \sigma(X) \equiv \chi(X) \,\, \op{mod} 2.$$

\item Let $X$ and $Y$ be both smooth compact complex algebraic varieties of complex dimension divisible by $4$, where we do not necessarily require that the dimensions are the same. Using (\ref{eq3.0})  we get
$$\sigma(X)-\chi(X) \equiv \sigma(Y) - \chi(Y) \,\, \op{mod} 4, \,\, \text{i.e.} \,\, \sigma(X) -\sigma(Y) \equiv \chi(X) -\chi(Y) \op{mod} 4.$$
Using (\ref{eq3.-1})  we get a weaker one:
$$\sigma(X)+\chi(X) \equiv \sigma(Y) +\chi(Y) \,\, \op{mod} 2, \,\, \text{i.e.} \,\, \sigma(X) -\sigma(Y) \equiv \chi(X) -\chi(Y) \op{mod} 2.$$
Hence, if $\chi(X) = \chi(Y)$, such as the case when $X$ is a fiber $F$ bundle over a base $B$ and $Y = F\times B$, then we obtain $\sigma(X) \equiv \sigma(Y) \op{mod} 4$ and a weaker one 
$\sigma(X) \equiv \sigma(Y) \op{mod} 2$.

\item Let $X$ and $Y$ be both smooth compact complex algebraic varieties of complex dimension $4k+2$ and $4j+2$, where we do not necessarily require $k=j$. Using (\ref{eq6.-1})  we get
$$\sigma(X)+\chi(X) \equiv \sigma(Y) +\chi(Y) \,\, \op{mod} 4, \,\, \text{i.e.} \,\, \sigma(X) -\sigma(Y) \equiv \chi(Y) -\chi(X) \op{mod} 4.$$
Using (\ref{eq6.0})  we get a weaker one:
$$\sigma(X)-\chi(X) \equiv \sigma(Y) -\chi(Y) \,\, \op{mod} 2, \,\, \text{i.e.} \,\, \sigma(X) -\sigma(Y) \equiv \chi(X) -\chi(Y) \op{mod} 2.$$
Hence, if $\chi(X) = \chi(Y)$, such as the case when $X$ is a fiber $F$ bundle over a base $B$ and $Y = F\times B$, then we obtain $\sigma(X) \equiv \sigma(Y) \op{mod} 4$ and a weaker one 
$\sigma(X) \equiv \sigma(Y) \op{mod} 2$.
\end{enumerate}
\end{rem}

\begin{cor} In the case when $2n=2$, i.e. $n=1$ in the above arguments, we have\\
\begin{enumerate}
\item $\displaystyle \tau(X) = \frac{\sigma(X) + \chi(X) }{4}, \quad \chi^1(X) = \frac{\sigma(X) -\chi(X) }{2}.$\\
\item 
$\displaystyle \chi_y(X) = \frac{\sigma(X)}{4} (1+y)^2 + \frac{\chi(X)}{4} (1-y)^2.$\\
\end{enumerate}
\end{cor}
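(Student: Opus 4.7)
The plan is to specialize the preceding theorem for $\op{dim}_{\mathbb C}X = 4k+2$ to the case $k=0$, which is exactly $2n=2$, $n=1$. Everything collapses very cleanly once the empty-index conventions in the displayed sums are tracked.

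For part (1), I would set $k=0$ in equations (\ref{eq6.-1}) and (\ref{eq6.0}). The sum $\sum_{j=1}^{k}\chi^{2j}(X)$ in (\ref{eq6.-1}) becomes empty, so that identity reduces to $\sigma(X)+\chi(X) = 4\tau(X)$, which solves to $\tau(X) = (\sigma(X)+\chi(X))/4$. Similarly, in (\ref{eq6.0}) the sum $\sum_{j=1}^{k}\chi^{2j-1}(X)$ is empty while $2\chi^{2k+1}(X) = 2\chi^{1}(X)$ survives, giving $\sigma(X)-\chi(X) = 2\chi^{1}(X)$, hence $\chi^{1}(X) = (\sigma(X)-\chi(X))/2$.

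For part (2), I would substitute $k=0$ into the formula of the $4k+2$ theorem. The coefficient of $\tau(X)$ is $(1-y^{2k})(1-y^{2k+2}) = 0$, so the $\tau(X)$ term drops out. The $\sigma(X)/4$ and $\chi(X)/4$ coefficients become $y^{2k}(1+y)^2 = (1+y)^2$ and $y^{2k}(1-y)^2 = (1-y)^2$ respectively. The two remaining sums — $\sum_{j=1}^{k-1}\chi^{2j}(X)\cdots$ and $\sum_{j=1}^{k}\chi^{2j-1}(X)\cdots$ — have ranges $j=1,\ldots,-1$ and $j=1,\ldots,0$ and are therefore empty. What remains is precisely $\chi_y(X) = \frac{\sigma(X)}{4}(1+y)^2 + \frac{\chi(X)}{4}(1-y)^2$.

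There is no real obstacle here; the corollary is a direct specialization. The only care point is the bookkeeping for the degenerate index ranges when $k=0$. As a consistency check one can bypass the general theorem entirely: since $\op{dim}_{\mathbb C}X = 2$, the duality formula (\ref{duality}) forces $\chi^{2}(X) = \chi^{0}(X) = \tau(X)$, so $\chi_y(X) = \tau(X)(1+y^{2}) + \chi^{1}(X)\,y$. Evaluating at $y=-1$ and $y=1$ gives the two linear relations $\chi(X) = 2\tau(X)-\chi^{1}(X)$ and $\sigma(X) = 2\tau(X)+\chi^{1}(X)$, whose sum and difference immediately yield part (1); substituting these back and using $(1+y^{2}) \pm 2y = (1\pm y)^{2}$ yields part (2).
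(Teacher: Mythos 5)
Your proposal is correct and matches the paper's (implicit) argument: the corollary is stated without proof as a direct specialization of the $\op{dim}_{\mathbb C}X = 4k+2$ theorem and of equations (\ref{eq6.-1}), (\ref{eq6.0}) at $k=0$, with the empty sums handled exactly as you do. Your independent consistency check via the duality formula is a nice bonus but is not needed and is not what the paper does.
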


\begin{cor}\label{curve} Let $E$ be a curve $F$ bundle over a curve $B$. 
\begin{enumerate}
\item The difference of the Hirzebruch $\chi_y$-genus is the following:
$$\chi_y(E) - \chi_y(F) \chi_y(B) = \frac{\sigma (E)}{4} (1+y)^2,$$
namely, we have
$$\chi_y(E) = \Bigl (1-g(F) \Bigr) \Bigl (1-g(B) \Bigr )(1-y)^2 + \frac{\sigma (E)}{4} (1+y)^2.$$
\item \label{curve2} In particular, the signature $\sigma(E)$ is always divisible by $4$.
\end{enumerate}
\end{cor}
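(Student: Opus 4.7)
The plan is to assemble this corollary from three ingredients already established in the paper: the explicit formula for $\chi_y$ on a complex surface (the $n=1$ case of the even-dimensional theorem just above), the formula for $\chi_y$ on a curve (the $n=0$ corollary in the odd-dimensional section), and the multiplicativity of the Euler-Poincar\'e characteristic for topological fiber bundles.

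First, since $\dim_{\mathbb C}E = 2$, I would specialize the immediately preceding corollary to write
\[
\chi_y(E) = \frac{\sigma(E)}{4}(1+y)^2 + \frac{\chi(E)}{4}(1-y)^2.
\]
(One can also derive this directly from the $4k+2$ theorem with $k=0$: the $\tau$ term carries the factor $(1-y^{2k})$ which vanishes at $k=0$, and both summation ranges are empty.) Next, applying the $n=0$ corollary to the curves $F$ and $B$ gives $\chi_y(F) = (1-g(F))(1-y)$ and $\chi_y(B) = (1-g(B))(1-y)$, so
\[
\chi_y(F)\chi_y(B) = (1-g(F))(1-g(B))(1-y)^2.
\]

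Then I would use the multiplicativity of the Euler-Poincar\'e characteristic to compute $\chi(E) = \chi(F)\chi(B) = 4(1-g(F))(1-g(B))$, which shows that the $(1-y)^2$ part of $\chi_y(E)$ equals $\chi_y(F)\chi_y(B)$ exactly. Subtracting yields
\[
\chi_y(E) - \chi_y(F)\chi_y(B) = \frac{\sigma(E)}{4}(1+y)^2,
\]
and the second displayed form in (1) follows by substituting back. For (2), the cleanest argument is that the left-hand side of the identity lies in $\mathbb Z[y]$ as a difference of integer polynomials, so $\tfrac{\sigma(E)}{4}$ must be an integer; alternatively one could invoke the general Theorem $\sigma(E) \equiv \sigma(F)\sigma(B) \pmod 4$ together with the fact that curves have vanishing signature (real dimension $2$ is not divisible by $4$).

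There is no real obstacle here — the statement is essentially a direct specialization. The only care required is checking that the various higher-degree terms in the general $4k+2$ formula genuinely disappear when $k=0$, and tracking the factor of $4$ that comes out of $\chi(F)\chi(B) = (2-2g(F))(2-2g(B))$ and lines up precisely with the denominator of the $\tfrac{\chi(E)}{4}(1-y)^2$ term.
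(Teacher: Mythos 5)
Your proposal is correct and follows essentially the same route the paper intends: the corollary is a direct specialization of the $4k+2$ results at $k=0$ (equivalently the surface formula $\chi_y(X)=\tfrac{\sigma(X)}{4}(1+y)^2+\tfrac{\chi(X)}{4}(1-y)^2$) combined with the curve formula $\chi_y(C)=(1-g(C))(1-y)$ and the multiplicativity of $\chi$, with the divisibility in (2) read off from integrality of $\chi_y \in \mathbb Z[y]$ exactly as the paper does. All the bookkeeping you flag (vanishing of the $\tau$ term via the factor $1-y^{2k}$ at $k=0$, emptiness of the sums, and the factor $4$ from $\chi(F)\chi(B)$) checks out.
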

\begin{rem} The result (\ref{curve2}) of Corollary \ref{curve}  is compatible with the result of H. Endo \cite{Endo} and W. Meyer \cite {Mey} that the signature of surface bundle over surface is always divisive by $4$, where surface is a surface of real dimension, not of complex dimension.
\end{rem}

\begin{rem} In \cite{BD} J. Bryan and R. Donagi proves the following

\noindent {\bf Theorem}
\emph{For any integers $g,n \geqq 2$, there exists a connected algebraic surface $X_{g,n}$ of signature $\sigma(X_{g,n}) = \frac{4}{3}g(g-1)(n^2-1)n^{2g-3}$ that admits two smooth fibrations $\pi_1: X_{g,n} \to C$ and $\pi_2: X_{g,n} \to \widetilde D$ with base and fiber genus
$(b_i, f_i)$ equal to}
$$\quad \, \, (b_1, f_1) = \Bigl (g, \, g(gn-1)n^{2g-2} + 1 \Bigr ) \,\, \text{and} $$
$$(b_2, f_2) = \Bigl (g(g-1)n^{2g-2} + 1, \, gn \Bigr).$$
\emph{respectively.}

Here we note that the signature $\sigma(X_{g,n}) = \frac{4}{3}g(g-1)(n^2-1)n^{2g-3}$  is indeed divisible by $4$,in fact divisible by $8$, because $(n^2-1)n^{2g-3} = (n^2-1)n\cdot n^{2g-4}= (n-1)n(n+1)n^{2g-4}$ is divisible by $6$ (since $(n-1)n(n+1)$ is divisible by $6$), thus the signature is divisible by $4 \times 2$. Furthermore it follows from the above corollary the $\chi_y$-genus of the surface $X_{g,n}$ (for both fibrations) is given by 
$$\chi_y(X_{g,n}) =g(gn-1)n^{2g-2}(g-1)(1-y)^2 + \frac{1}{3}g(g-1)(n^2-1)n^{2g-3}(1+y)^2.$$
From which we get that
$$\chi(X_{g,n}) =4g(g-1)(gn-1)n^{2g-2} \,\, \text{and} \,\, \tau(X_{g,n}) = \frac{1}{3}g(g-1)n^{2g-3}(3gn^2-3n+n^2-1).$$
\end{rem}

From the above corollary we get the following theorem.
\begin{thm} $\chi_y$-genus is multiplicative for any such fiber bundle $F \to E \to B$ as above (without any dimension requirement) if and only if $y= -1$, i.e. the only Euler-Poincar\'e characteristic $\chi$ is multiplicative for any such fiber bundle.
\end{thm}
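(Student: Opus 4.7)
The plan is to read the statement as asserting the existence of a single value $y_0$ for which $\chi_{y_0}$ is multiplicative on every fiber bundle $F\hookrightarrow E\to B$ of smooth compact complex algebraic varieties, and to argue that $y_0$ must equal $-1$. The ``if'' direction is immediate and requires no new work: at $y=-1$ the Hirzebruch $\chi_y$-genus specializes to the Euler--Poincar\'e characteristic, which is multiplicative for \emph{every} topological fiber bundle, as already recalled in the introduction.

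For the ``only if'' direction the key input is Corollary \ref{curve}, which gives the closed form
$$\chi_y(E) - \chi_y(F)\chi_y(B) = \frac{\sigma(E)}{4}(1+y)^2$$
for any curve bundle $E$ over a curve. If $y_0$ is a numerical value for which $\chi_{y_0}$ is universally multiplicative, then substituting $y=y_0$ into this identity forces $\sigma(E)(1+y_0)^2=0$ for every such surface $E$.

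It remains to exhibit a single curve bundle over a curve with nonzero signature; this will force $(1+y_0)^2=0$, hence $y_0=-1$. Such examples are classical and are already quoted in the paper: the Atiyah \cite{At}, Hirzebruch \cite{Hir0} and Kodaira \cite{Ko} constructions all produce complex surfaces of nonzero signature fibered by curves over a curve, and the more explicit Bryan--Donagi surfaces $X_{g,n}$ of the preceding remark have signature $\frac{4}{3}g(g-1)(n^2-1)n^{2g-3}$, which is strictly positive whenever $g,n\geq 2$.

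There is no substantive obstacle: the theorem is essentially a direct corollary of Corollary \ref{curve} combined with the existence, appealed to throughout the introduction, of a curve bundle over a curve whose signature does not vanish. The only choice to make is which of the cited examples to point to; the Bryan--Donagi family is perhaps the cleanest since the signature is written out explicitly in the preceding remark.
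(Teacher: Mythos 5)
Your proposal is correct and follows essentially the same route as the paper: the paper's own proof also combines the identity $\chi_y(E)-\chi_y(F)\chi_y(B)=\frac{\sigma(E)}{4}(1+y)^2$ from Corollary \ref{curve} with the existence of a curve bundle over a curve of nonzero signature (Atiyah or Bryan--Donagi) to force $y=-1$. Your write-up is slightly more explicit about the trivial ``if'' direction, but there is no substantive difference.
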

\begin{proof} Let $y$ be a value such that $\chi_y$-genus is multiplicative for any such fiber bundle $F \to E \to B$. Then consider it for a curve bundle over a curve by Atiyah \cite{At} or the above examples by Bryan-Donagi, in which case the signature of the total space $E$ is non-zero. Hence
we have to have $0= \chi_y(E)-\chi(F)\chi(B) = \frac{\sigma(E)}{4}(1+y)^2$, which implies that $y=-1$.  
\end{proof}

\begin{rem}
$\chi_y(E) = \chi_y(F) \chi_y(B)$ if and only if $\sigma (E) = 0$, and $\sigma (E) = 0$ is equivalent to the multiplicativity $\tau(E) = \tau(F) \tau(B).$ The latter follows from 
$$4 \tau(E) = \sigma(E) + \chi(E) \, \, \text{and} \, \,  \chi(E)=\chi(F)\chi(B) =\Bigl(2\tau(F) \Bigr) \Bigl(2\tau(B) \Bigr) =4\tau(F)  \tau(B).$$
\end{rem}


\begin{cor} In the case when $2n=4$, i.e. $n=2$ in the above proposition, we have\\

(1) $\displaystyle \chi^1(X) = \frac{\sigma(X) -\chi(X) }{4}, \quad  \chi^2(X) = \frac{\sigma(X) + \chi(X) - 4\tau(X)}{2}.$ \\

(2)
$\displaystyle \chi_y(X)  = \tau(X)(1 -y^2)^2+ \frac{\sigma(X)}{4}y(1+y)^2- \frac{\chi(X)}{4} y(1-y)^2.$ \\

\end{cor}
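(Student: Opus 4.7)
The plan is to specialize the theorem for $\dim_{\bC}X = 4k$ (the first even-dimensional theorem in the excerpt) to the case $k=1$, at which point both assertions drop out almost immediately. Part (2) is pure substitution: setting $k=1$ in
\[
\chi_y(X) = \tau(X)(1-y^{2k})^2 + \tfrac{\sigma(X)}{4}y^{2k-1}(1+y)^2 - \tfrac{\chi(X)}{4}y^{2k-1}(1-y)^2 + \sum_{j=1}^{k-1}(\cdots) + \sum_{j=1}^{k-1}(\cdots)
\]
makes both summations empty (the index runs from $1$ to $0$), leaving
\[
\chi_y(X) = \tau(X)(1-y^2)^2 + \tfrac{\sigma(X)}{4}y(1+y)^2 - \tfrac{\chi(X)}{4}y(1-y)^2,
\]
which is exactly the stated identity.

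For part (1), I would invoke equations (\ref{eq3}) and (\ref{eq4}) that were extracted inside the proof of that theorem. Setting $k=1$ in (\ref{eq4}), the sum $\sum_{j=1}^{k-1}\chi^{2j-1}(X)$ is empty, yielding $\chi^1(X) = \frac{\sigma(X)-\chi(X)}{4}$. Setting $k=1$ in (\ref{eq3}) similarly gives $\chi^2(X) = \frac{\sigma(X)+\chi(X)}{2} - 2\tau(X) = \frac{\sigma(X)+\chi(X)-4\tau(X)}{2}$, as required.

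There is essentially no obstacle here: the full content of the corollary is simply the $k=1$ instance of results already established, and the only thing to verify is that the empty summations are treated correctly. If one preferred a self-contained derivation bypassing the general theorem, one could instead specialize (\ref{eq1}) and (\ref{eq2}) to $n=2$ to obtain two linear relations among $\tau(X), \chi^1(X), \chi^2(X)$ and $\chi(X), \sigma(X)$, solve them directly for $\chi^1(X)$ and $\chi^2(X)$, and then substitute these expressions into the degree-$4$ version of (\ref{eq0}), namely $\chi_y(X) = \tau(X)(1+y^4) + \chi^1(X)y(1+y^2) + \chi^2(X)y^2$, rearranging with respect to $\tau(X), \sigma(X), \chi(X)$ to recover the formula in (2).
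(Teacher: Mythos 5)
Your proposal is correct and matches the paper's intended derivation: the corollary is simply the $k=1$ instance of the theorem for $\operatorname{dim}_{\mathbb C}X=4k$ (for part (2)) and of equations (\ref{eq3}) and (\ref{eq4}) (for part (1)), with the empty sums handled exactly as you describe. The self-contained alternative via (\ref{eq1}), (\ref{eq2}) and (\ref{eq0}) also checks out, but it is just the same computation unrolled.
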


\begin{cor} 
\begin{enumerate}
\item Let $E$ be a curve $F$ bundle over a $3$-fold $B$ or a $3$-fold bundle over a curve $B$. Then the difference of the Hirzebruch $\chi_y$-genus is the following:
$$\chi_y(E) - \chi_y(F) \chi_y(B) = \Bigl (\tau(E) -\tau(F) \tau (B) \Bigr ) (1-y^2)^2 + \frac{\sigma(E)}{4} y(1+y)^2.$$
\item
Let $E$ be a surface $F$ bundle over a surface $B$. Then the difference of the Hirzebruch $\chi_y$-genus is the following:
\begin{align*}
\chi_y(E) - \chi_y(F) \chi_y(B) & = \Bigl (\tau(E) -\tau(F) \tau (B) \Bigr ) (1-y^2)^2 + \\
& \hspace{4.5cm}\left ( \frac{\sigma(E) -\sigma(F) \sigma(B)}{4} \right) y(1+y)^2.
\end{align*}
\item Let the situation be as both cases. 
$$\chi_y(E) =\chi_y(F) \chi_y(B)  \, \, (\forall y) \, \, \Longleftrightarrow \tau(E) =\tau(F) \tau (B) \, \, \text{and} \,\, \sigma(E) = \sigma(F)  \sigma(B).$$

\end{enumerate}
\end{cor}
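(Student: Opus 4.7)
The left-hand side involves $E$, $F$, and $B$, but the preceding corollary is stated for a single smooth compact complex algebraic $4$-fold. The first step is to replace $\chi_y(F)\chi_y(B)$ by $\chi_y(F\times B)$ using multiplicativity of $\chi_y$ on products, so that the difference becomes $\chi_y(E)-\chi_y(F\times B)$, a comparison of two $4$-folds to which the previous $n=2$ formula applies.

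Next, I would apply that formula to both $E$ and $F\times B$ and subtract, obtaining
$$\chi_y(E)-\chi_y(F\times B) = \bigl(\tau(E)-\tau(F\times B)\bigr)(1-y^2)^2 + \frac{\sigma(E)-\sigma(F\times B)}{4}\,y(1+y)^2 - \frac{\chi(E)-\chi(F\times B)}{4}\,y(1-y)^2.$$
Three facts collapse this: multiplicativity of $\chi$ for any topological fiber bundle gives $\chi(E)=\chi(F)\chi(B)=\chi(F\times B)$, killing the third summand; and multiplicativity of $\chi_y$ on products evaluated at $y=0$ and $y=1$ gives $\tau(F\times B)=\tau(F)\tau(B)$ and $\sigma(F\times B)=\sigma(F)\sigma(B)$. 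Substituting yields exactly (2).

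For (1), one of $F$, $B$ has odd complex dimension ($1$ or $3$), hence real dimension not divisible by $4$, so the corresponding signature vanishes by definition; thus $\sigma(F)\sigma(B)=0$ and (2) specializes to (1). For (3), the direction $(\Leftarrow)$ is immediate from (1) and (2). For $(\Rightarrow)$, if the difference polynomial vanishes for all $y$, I would exploit the fact that $(1-y^2)^2 = 1-2y^2+y^4$ has only even powers while $y(1+y)^2 = y+2y^2+y^3$ has a nonzero coefficient of $y$: the coefficient of $y^1$ in the difference forces $\sigma(E)=\sigma(F)\sigma(B)$, and then the constant coefficient forces $\tau(E)=\tau(F)\tau(B)$.

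\textbf{Main obstacle.} There is no serious obstacle here; the argument is essentially bookkeeping on top of the preceding $n=2$ corollary. The one point that deserves care is the clean cancellation of the $\chi$-term, which relies on the topological (as opposed to algebraic) multiplicativity of the Euler--Poincar\'e characteristic for fiber bundles recalled in the Introduction, and on the product identity $\chi(F\times B)=\chi(F)\chi(B)$.
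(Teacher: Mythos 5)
Your proposal is correct and follows essentially the same route as the paper: the paper obtains this as the $k=1$ case of its general fiber-bundle difference corollary, which is itself derived exactly as you do — apply the single-variety formula (here the $n=2$ case, $\chi_y(X)=\tau(X)(1-y^2)^2+\frac{\sigma(X)}{4}y(1+y)^2-\frac{\chi(X)}{4}y(1-y)^2$) to both $E$ and $F\times B$, subtract, and cancel the Euler-characteristic term via the topological multiplicativity $\chi(E)=\chi(F)\chi(B)=\chi(F\times B)$. Your handling of (1) via $\sigma(F)\sigma(B)=0$ for odd-dimensional factors and of (3) via comparing the $y^0$ and $y^1$ coefficients is exactly the intended bookkeeping.
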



\section{Final remarks}
We note that as to the general $\chi_y$-genus $\chi_y(X)$ of possibly singular varieties, the value for which $\chi_y$ is multiplicative for any fiber bundle with both fiber and base being possibly singular  
has to be $-1$. $\chi_y$-genus is a special case of the Hodge-Deligne polynomial $\chi_{u,v}(X)$ (defined by using the mixed Hodge structure), i.e. $\chi_y(X) = \chi_{y, -1}(X)$. We do know that the Hodge-Deligne polynomial is also multiplicative $\chi_{u,v}(X \times Y) = \chi_{u,v}(X)\chi_{u,v}(Y)$. Is it true that the values $(u, v)$ for which the Hodge-Deligne polynomial $\chi_{u,v}(X)$ is multiplicative for any fiber bundle with both fiber and base being possibly singular has to be $(-1, -1)$?

To get the above various results the duality formula (\ref{duality}) is crucial. Therefore, even if $E, F, B$ are singular varieties, as long as the $\chi_y$-genus $\chi_y(X)=\int_XT_{y*}(X)$ satisfies the duality formula (such a variety could be called ``$\chi_y$-duality variety", mimicking the name of Poincar\'e duality space), we would get the same results as above. For example, a projective simplicial toric variety as discussed in \cite{MS2} (cf. \cite{MSS}, \cite{Sch}) is such a variety (pointed out by J. Sch\"urmann) \footnote{We will deal with more general cases of singular varieties and the intersection homology $\chi_y$-genus $I\chi_y(X)$ in a different paper, since we would like to stick to smooth varieties in this paper for the sake of simplicity.}.


C. Rovi \cite{Rov} proved that for a fiber bundle $F \to E \to B$ the signature is multiplicative mod $8$, $\sigma(E) \equiv \sigma(F)\sigma(B) (\op{mod} 8)$ if $\pi_1(B)$ acts trivially on $(H_{2m}(F)/\op{torsion}) \otimes \mathbb Z_4$ (cf. A. Korzeniewski \cite{Kor}).  As to the signature mod $8$, S. Morita \cite{Mor} has proved that for a closed oriented manifold $M^{4k}$ of real dimension $4k$,
$$\sigma(M) \equiv BK(H^{2k}(M, \mathbb Z_2), q) (\op{mod} 8),$$
where $BK(H^{2k}(M, \mathbb Z_2), q)$ is the $\mathbb Z_8$-valued Brown-Kervaire invariant \cite{Bro}. 
Using this Morita's formula we get the following formula for our situation of the complex algebraic fiber bundle $F^{2n} \to E^{2n+2m} \to B^{2m}$ (where we consider the complex dimensions $2n, 2m$):
$$\sigma(E) - \sigma(F) \sigma(B) \equiv BK(H^{2n+2m}(E, \mathbb Z_2), q) - BK(H^{2n+2m}(F\times B, \mathbb Z_2), q)\,  (\op{mod} 8).$$
In \cite{Rov} (also see its concise version \cite{Rov2}) C. Rovi has analyzed the Brown-Kervaire invariant  difference $BK(H^{2n+2m}(E, \mathbb Z_2), q) - BK(H^{2n+2m}(F\times B, \mathbb Z_2), q)$ in more details.

At the moment we do not know how to capture the parity of the integer $\frac{\sigma(E)-\sigma(F) \sigma(B)}{4}$ and how to relate this parity $\frac{\sigma(E)-\sigma(F)\sigma(B)}{4} \op{mod} 2$ to Rovi's Arf-Kervaire invariant \cite{Rov} (suggested by A. Ranicki).\\

\noindent
{\bf Acknowledgements:} \,The author would like to thank Sylvain Cappell for informing him of the reference \cite{Rov}. He also would like to thank Paolo Aluffi, Markus Banagl, Laurentiu Maxim, Andrew Ranicki, Carmen Rovi and J\"org Sch\"urmann for useful comments and suggestions for an earlier version of the paper and pointing out typos.


\end{document}